\newcommand{\NN}{\mathbb{N}}
\newcommand{\OO}{\mathscr{O}}
\newcounter{dummy}
\newcommand{\sset}{\subset}
\newcommand{\sseteq}{\subseteq}
\DeclareMathOperator{\rv}{rv}
\DeclareMathOperator{\RV}{RV}
\DeclareMathOperator{\VF}{VF}
\DeclareMathOperator{\dcl}{dcl}
\DeclareMathOperator{\acl}{acl}
\DeclareMathOperator{\res}{res}
\DeclareMathOperator{\Th}{Th}
\DeclareMathOperator{\tp}{tp}
\def\hmix{h$^{\mathrm{mix}}$}
\theoremstyle{plain}
\newtheorem{thmintro}{Theorem}
\newtheorem{corintro}[thmintro]{Corollary}
\newtheorem{theorem}{Theorem}[section]
\newtheorem{prop}[theorem]{Proposition}
\newtheorem{lem}[theorem]{Lemma}
\newtheorem{qu}[theorem]{Question}
\newtheorem{quintro}[thmintro]{Question}
\newtheorem*{claim*}{Claim}
\newtheorem*{subclaim*}{Subclaim}
\newtheorem*{claima*}{Claim~A}
\newtheorem*{claimb*}{Claim~B}
\newtheorem*{claimc*}{Claim~C}
 \theoremstyle{definition}
\newtheorem{defn}[theorem]{Definition}
 \newtheorem{exa}[theorem]{Example}
 \newtheorem{rmk}[theorem]{Remark}
\begin{document}

\title[Spherically~complete Hensel minimal fields]{Spherically~complete~models of~Hensel~minimal~valued~fields}
\author{David Bradley-Williams}
\author{Immanuel Halupczok}
\address{Mathematisches Institut der Heinrich-Heine-Universit\"at, Universit\"atsstr. 1, 40225 D\"usseldorf, Germany}

\thanks{The authors would like to thank Franz-Viktor Kuhlmann for helpful comments about valuation theory in mixed characteristic and Martin Hils for asking a question leading us to Remark \ref{r.sph.sat}, as well as the anonymous referee for valuable comments that improved the presentation of our results and for posing Question \ref{qu.not.finram}. Both authors would like to thank the organisers of the \emph{Model Theory, Combinatorics and Valued fields} trimester at the IHP, Paris in 2018, where some of this work was carried out; the participation of D.B-W. in the trimester was partially supported by a HeRA travel grant. Both authors were partially supported by the research training group
\emph{GRK 2240: Algebro-Geometric Methods in Algebra, Arithmetic and Topology}, funded by the DFG. I.H. was additionally partially suppored by the individual research grant No. 426488848, \emph{Archimedische und nicht-archimedische Stratifizierungen h\"oherer Ordnung}, also funded by the DFG. D.B-W. would also like to thank the Fields Institute, Toronto for hospitality and support during the 2021 thematic program
``Trends in Pure and Applied Model Theory'', which provided the ideal environment for various revisions and improvements.}

%\date{\today}

\subjclass[2010]{03C60,03C65,12J25}

\begin{abstract}
We prove that Hensel minimal expansions of finitely ramified Henselian valued fields admit spherically complete immediate elementary extensions. More precisely, the version of Hensel minimality we use is $0$-\hmix-minimality (which, in equi-characteristic $0$, amounts to $0$-h-minimality).
\end{abstract}

\maketitle

Geometry tends to be a lot easier over $\mathbb{R}$ than over $\mathbb{Q}$ since $\mathbb{R}$ is complete. Something similar is true in valued fields, where the most useful notion is \emph{spherical completeness}, meaning that every chain of valuative balls has non-empty intersection.
Recall that a valued field is spherically complete if and only if it is \emph{maximal}, i.e., if it has no proper immediate extension. (This is the statement of \cite[Theorem~4]{Kap1942}, where spherical completeness is phrased in terms of pseudo-convergent sequences.)
A classical result of W. Krull ({\cite[Satz~24]{Kru1932}}, see \cite{Gra1956} for a short proof of the required estimate) yields that every valued field has a maximal immediate extension. If $K$ is of equi-characteristic $0$, then this maximal immediate extension is even unique up to valued-field isomorphism (\cite{Kap1942}, Theorem 5 combined with Theorem 6): It is the Hahn field $k((t^{\Gamma}))$, where $k$ is the residue field of $K$ and $\Gamma$ is its value group. In mixed characteristic, uniqueness of maximal immediate extensions is a subtle issue in general (see \cite{Kap1942}, elucidated by \cite{KPR1986}, and a recent update in \cite{BK2017}). However for finitely ramified valued fields of mixed characteristic, it is known that the maximal immediate extension is unique \cite[Corollary 4.29]{vdD14}. Recall that a valued field of finite residue characteristic $p$ is \emph{finitely ramified} if there are only finitely many values in the value group $\Gamma$ between $|p|$ and $|1|=1$, where $|x|$ stands for the valuation of $x$, which we write multiplicatively.

\setcounter{thmintro}{-1}

Spherical completeness is not a first order property;
i.e., if $K'$ is spherically complete and $K \equiv K'$ is elementarily equivalent (in some suitable valued field language), then $K$ does not need to be spherically complete. However, such a $K$ is certainly ``definably spherically complete'', i.e., every \emph{definable} chain of balls in $K$ has non-empty intersection (see Section 1 for a more detailed definition). This raises the following:
\begin{quintro}\label{Q}
If $K$ is definably spherically complete, does it have an (actually) spherically complete elementary extension $L$? Can $L$ additionally be taken to be an immediate extension of $K$?
\end{quintro}

If $K$ has equi-characteristic $0$ and
we work in the pure valued field language
(say, the ring language together with a predicate for the valuation ring), then the answer to Question~\ref{Q} is yes-and-yes: In that case, $K$ is Henselian, and then
Ax--Kochen/Ershov implies that its maximal immediate extension is an elementary extension.
In particular, given a Henselian valued field $K$ of equi-characteristic $0$, a model theorist may assume without loss that $K$ is spherically complete, by passing to an elementary extension. (We use this crucially in \cite{BWH-canstrats}.)

The aim of this note is to prove that Question~\ref{Q} also has a positive answer for certain expansions of the valued field language on $K$, namely
when the structure on $K$ is $0$-h-minimal.
Recall that
$0$-h-minimality is an analogue of o-minimality for valued fields. More precisely, in 
\cite[Definition~1.2.3]{CHR-hmin}, a whole family of such analogues are introduced, called ``Hensel minimality'' in general, where $0$-h-minimality is the weakest of them.
Note that Question~\ref{Q} was already known before to have a positive answer in some (other) cases; see Section~\ref{sec:qu}.

%More precisely, we show that such a $K$ has an elementary extension $L$ which is maximal immediate as a valued field.
We also obtain a positive answer in mixed characteristic under the assumption that $K$ is finitely ramified. More precisely, in mixed characteristic,
there are several variants of $0$-h-minimality; the one we use is $0$-\hmix-minimality in the sense of \cite[Definition~2.2.1]{CHRV-hmin2}.
Note that among all the variants of Hensel minimality introduced in \cite{CHRV-hmin2} (called $\ell$-h$^{\star}$-minimality for $\ell \in \NN \cup \{\omega\}$ and $\star \in \{\mathrm{mix}, \mathrm{coars}, \mathrm{ecc}\}$), the only ones \emph{not} currently known to imply $0$-\hmix-minimality are $0$-h$^{\mathrm{coars}}$-minimality and $0$-h$^{\mathrm{ecc}}$-minimality.\footnote{For $\ell \ge 1$, $\ell$-h$^\star$-minimality implies $1$-h$^\star$-minimality by definition, which implies $1$-\hmix-minimality by \cite[Theorem~2.2.8]{CHRV-hmin2}, which implies $0$-\hmix-minimality by definition.\refstepcounter{dummy}\label{foo}} Note also that $0$-\hmix-minimality is equivalent to $0$-h-minimality in equi-characteristic $0$.

Assuming finite ramification in the mixed characteristic case seems natural: 
$0$-\hmix-minimality does not imply definable spherical completeness in general (see Example~\ref{exa:not-dsc}),
but it does if we assume finite ramification (see Proposition~\ref{prop:dsc-mix}).

To summarize, our main result is the following:

 \begin{thmintro}\label{t.sphc.ex}
Let $K$ be a characteristic 0 valued field, considered as a structure in a language expanding the valued field language, such that, either
\begin{itemize}
 \item The residue characteristic of $K$ is 0 and $\Th(K)$ is 0-h-minimal; or
 \item $K$ is finitely ramified, of finite residue characteristic, and $\Th(K)$ is $0$-\hmix-minimal.
\end{itemize}
Then $K$ has an elementary extension $L \succ K$ which is immediate and spherically complete (and hence maximal).
\end{thmintro}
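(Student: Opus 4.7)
The strategy is to construct $L$ as an immediate, spherically complete elementary extension of $K$ by a transfinite process of adjoining pseudo-limits of pseudo-Cauchy sequences. Work inside a sufficiently saturated elementary extension $\hat K \succ K$, and build a continuous elementary chain $K = L_0 \preceq L_1 \preceq \cdots \preceq L_\alpha \preceq \cdots$ inside $\hat K$, where each $L_\alpha$ is an immediate valued-field extension of $K$. The process continues until the union $L = L_\lambda$ is spherically complete; since the cardinality of any immediate extension of $K$ is bounded (by Krull's theorem together with the Hahn-field description in equi-characteristic $0$, and by \cite[Corollary~4.29]{vdD14} in the finitely ramified mixed characteristic case), the iteration must terminate.

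At each successor stage $\alpha + 1$, assuming $L_\alpha \preceq \hat K$ is an immediate extension of $K$ that is not yet spherically complete, pick a pseudo-Cauchy sequence $(a_i)_{i \in I}$ in $L_\alpha$ without a pseudo-limit in $L_\alpha$. The partial type $p(x) = \{\, |x - a_j| < |a_j - a_i| : i < j \in I \,\}$ is consistent in $\hat K$ by saturation. The key point is to choose a realization $b \in \hat K$ so that $\tp(b/L_\alpha)$ is a canonical (definable) ``generic'' completion of $p$, and then to set $L_{\alpha+1}$ to be an elementary substructure of $\hat K$ containing $L_\alpha \cup \{b\}$ that remains immediate over $L_\alpha$ --- for instance the Skolem-closure of $L_\alpha(b)$ inside $\hat K$, using that an h-minimal structure has sufficient definable Skolem functions on the relevant sorts.

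The main obstacle is establishing the existence of such a definable generic completion and ensuring that the associated extension is both elementary and immediate. Here $0$-h-minimality enters as the crucial input: it controls definable subsets of $K$ via finitely many balls, so any formula $\phi(x, \bar c)$ with $\bar c \in L_\alpha$ either contains or avoids all sufficiently late members of the pseudo-Cauchy sequence, making the pseudo-limit partial type generate a complete type (up to specifying, in the algebraic-type case, a root of the associated minimal polynomial). In mixed characteristic, the finite ramification hypothesis is needed so that $0$-\hmix-minimality implies definable spherical completeness at every stage of the chain (Proposition~\ref{prop:dsc-mix}), ensuring the pseudo-Cauchy sequences encountered behave analogously to those in the equi-characteristic $0$ case.

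Immediacy of $L_\alpha \subseteq L_{\alpha+1}$ is then preserved inductively: by Kaplansky's classification of pseudo-Cauchy sequences, $L_\alpha(b)$ is an immediate extension of $L_\alpha$ (transcendental when $(a_i)$ is of transcendental type, algebraic of minimal-polynomial degree when of algebraic type); in finitely ramified mixed characteristic, \cite[Corollary~4.29]{vdD14} ensures immediacy survives passage to any algebraic- or Skolem-closure taken inside $\hat K$. Elementarity at limit stages follows from the Tarski--Vaught chain lemma. Once the iteration terminates, the resulting $L$ is the desired immediate, spherically complete elementary extension of $K$.
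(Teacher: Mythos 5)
There is a genuine gap at the crucial step of your construction: the passage from $L_\alpha\cup\{b\}$ to the next model $L_{\alpha+1}$. You propose to take ``the Skolem-closure of $L_\alpha(b)$ inside $\hat K$, using that an h-minimal structure has sufficient definable Skolem functions on the relevant sorts.'' But $0$-h-minimal (and $0$-\hmix-minimal) structures do \emph{not} in general have definable Skolem functions on the valued field sort: already for a Henselian field such as $\CC((t))$ in the pure valued field language (which is $0$-h-minimal) there is no definable way to choose a point from a ball. Without Skolem functions you can of course find \emph{some} elementary substructure of $\hat K$ containing $L_\alpha(b)$ by L\"owenheim--Skolem, but then you have no control whatsoever over its residue field and value group, so immediacy over $L_\alpha$ (and over $K$) is lost, and your appeal to \cite[Corollary~4.29]{vdD14} does not repair this: uniqueness of the maximal immediate extension says nothing about whether a closure taken inside $\hat K$ stays immediate. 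This is precisely the central difficulty the paper's proof is designed to overcome: it shows (Proposition~\ref{p.sphc.ex}) that for a pcl $\alpha$ the set $\acl_{\VF}(N,\alpha)$ is \emph{already} an elementary substructure of the ambient saturated model and satisfies $\RV_{\acl_{\VF}(N,\alpha)}=\RV_N$, via Claim~A ($\acl_{\VF}=\dcl_{\VF}$ over $N\cup\{\alpha\}$, using Lemma~\ref{lem.finite}), Claim~B (definable subsets of the relevant $\RV$-sorts over $L$ are already definable over $N$, using Lemma~\ref{lem.emptydef} and the pcl property), and Claim~C (a Tarski--Vaught argument exploiting that every definable subset of the field is a union of fibers of $x\mapsto(\rv_\mu(x-c_i))_i$ for a finite $C$). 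Your remark that $0$-h-minimality forces a formula over $L_\alpha$ to ``contain or avoid all sufficiently late members of the pseudo-Cauchy sequence'' is the right intuition and is essentially what Lemma~\ref{lem.emptydef} makes precise, but as stated it neither yields completeness of the pseudo-limit type in the expanded (non-algebraic) language nor addresses the closure problem above; similarly, Kaplansky's classification only controls the valued-field extension $L_\alpha(b)$, not the structure generated in the richer language. (Your outer transfinite/cardinality-bound scaffolding is a workable alternative to the paper's Zorn's-Lemma argument, and finite ramification is indeed needed in mixed characteristic, though its main technical use in the paper is inside Lemma~\ref{lem.emptydef}, to convert the pcl condition into equality of $\rv_{|m|}$-values, rather than only via Proposition~\ref{prop:dsc-mix}.)
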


By Remark~\ref{r.sph.sat}, passing from $K$ to $L$ preserves saturation, so we also obtain the following:

\begin{corintro}
Given any cardinal $\kappa$, any $K$ as in Theorem~\ref{t.sphc.ex} has a $\kappa$-saturated elementary extension that is moreover spherically complete.
\end{corintro}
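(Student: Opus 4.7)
The plan is to bootstrap the corollary from Theorem~\ref{t.sphc.ex} by interposing a saturation step. Given a cardinal $\kappa$ and $K$ as in the statement, I would first invoke standard model-theoretic machinery to produce a $\kappa$-saturated elementary extension $K^\ast \succ K$; such an extension exists for any complete theory, with no special assumptions on $K$. The key observation is that the hypotheses of Theorem~\ref{t.sphc.ex} are invariant under elementary equivalence: being $0$-h-minimal (respectively $0$-\hmix-minimal) is a property of $\Th(K) = \Th(K^\ast)$, so $K^\ast$ inherits it automatically; and being finitely ramified with a fixed ramification index $e$ is expressible by a single first-order sentence (``there are exactly $e$ elements of the value group in the interval between $|p|$ and $1$''), so this too transfers to $K^\ast$. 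The residue characteristic also transfers.

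Next, I would apply Theorem~\ref{t.sphc.ex} directly to $K^\ast$, which yields an immediate, spherically complete elementary extension $L \succ K^\ast$. By Remark~\ref{r.sph.sat}, which states exactly that the passage from $K$ to $L$ of Theorem~\ref{t.sphc.ex} preserves saturation, $L$ inherits $\kappa$-saturation from $K^\ast$. Since elementary extension is transitive, we conclude $L \succ K$, and $L$ is simultaneously $\kappa$-saturated and spherically complete, as required.

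The argument itself is purely formal once Theorem~\ref{t.sphc.ex} and Remark~\ref{r.sph.sat} are available, so there is no genuine obstacle at the level of the corollary. The real work sits upstream: Theorem~\ref{t.sphc.ex} requires the technical analysis of immediate extensions in the Hensel-minimal setting (particularly the mixed-characteristic finitely ramified case, which is the non-trivial input beyond the classical Ax--Kochen/Ershov situation in equi-characteristic $0$), and Remark~\ref{r.sph.sat} requires verifying that no new types over small parameter sets from $K^\ast$ are introduced in passing to the immediate extension $L$---a statement which relies on immediacy together with the control over definable sets afforded by $0$-\hmix-minimality.
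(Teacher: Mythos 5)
Your proposal is correct and follows exactly the route the paper intends: pass to a $\kappa$-saturated $K^\ast\succ K$ (the hypotheses transfer since $0$-h-/$0$-\hmix-minimality are properties of the theory and finite ramification is first-order), apply Theorem~\ref{t.sphc.ex} to $K^\ast$, and use Remark~\ref{r.sph.sat} to see the resulting spherically complete immediate extension stays $\kappa$-saturated.
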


\medskip

The notion of $0$-h-minimality is defined by limiting the complexity of definable subsets of the field (in an analogy to o-minimality).
We refer to \cite{CHR-hmin} for details, particularly Definition 1.2.3 and Section 1.2 for the main definitions, and to \cite{CHRV-hmin2}, particularly Section 2, for the mixed characteristic versions. While we do not recall the original definitions of those minimality notions, Lemma~\ref{lem.hmin.equi} below may be taken as a definition of $0$-h-minimality and Lemma~\ref{lem.hmin.mix} may be taken as a definition of $0$-\hmix-minimality.

Many examples of valued fields with $0$-h-minimal and $0$-\hmix-minimal theories are given in \cite[Section~6]{CHR-hmin}. Those in particular include all Henselian valued fields of characteristic $0$ in the valued field language, expansions by various kinds of analytic functions, and the power bounded $T$-convex valued fields from \cite{DL.Tcon1} (see Section~\ref{sec:qu}).

\section{Notations and Assumptions}

We use notation and conventions as in \cite[Section 1.2]{CHR-hmin}:
\begin{itemize}
    \item 
Given a valued field $K$, we write $|x|$ for the valuation of an element $x \in K$, we use multiplicative notation for the value group, which is denoted by $\Gamma^\times_K$, and we set
$\Gamma_K := \Gamma^\times_K \cup \{0\}$, where $0 := |0|$. The valuation ring of $K$ is denoted by $\OO_K$.
\item
Given $a \in K$ and $\lambda \in \Gamma_K$, we write $B_{<\lambda}(a) = \{x \in K : |x-a| < \lambda\}$ for the open ball of radius $\lambda$ around $a$.
\item
Given $\lambda \le |1|$ in $\Gamma^\times_K$, we write $\RV_{\lambda,K} := (K^\times/B_{<\lambda}(1)) \cup \{0\}$ for the corresponding leading term structure (recall that $B_{<\lambda}(1)$ is a subgroup of the multiplicative group $K^\times$) and $\rv_\lambda\colon K \to \RV_{\lambda,K}$ for the canonical map.
In the case $\lambda = |1|$, we will also omit the index $\lambda$, writing $\rv\colon K \to \RV_{K}$.
\item We will freely consider $\Gamma_K$ as an imaginary sort, and also $\RV_{\lambda,K}$, when $\lambda$ is $\emptyset$-definable. (Actually, the only $\lambda$ relevant in this note are of the form $|p^\nu|$, where $p$ is the residue characteristic.)
\end{itemize}

The notions of $0$-h-minimality (from \cite{CHR-hmin}) and $0$-\hmix-minimality (from \cite{CHRV-hmin2}) are defined by imposing conditions on definable subsets of $K$. Instead of recalling those definitions, we cite some direct consequences as Lemmas~\ref{lem.hmin.equi} and \ref{lem.hmin.mix}. Those are simply family versions of the definitions
of $0$-h-minimality and $0$-\hmix-minimality. (The actual definitions are obtained by restricting those lemmas to the case $k = 0$.)

\begin{lem}[{\cite[Proposition~2.6.2]{CHR-hmin}}]\label{lem.hmin.equi}
Suppose that $K$ is a valued field of equi-characteristic $0$ with $0$-h-minimal theory, that $A \subseteq K$ is an arbitrary (parameter) set and that $W \subseteq K \times \RV_K^k$ is $(A \cup \RV_K)$-definable. Then there exists a finite (without loss non-empty) $A$-definable set $C \subset K$ such that, for all $x \in K$, the fiber $W_x \subseteq \RV_K^k$ only depends on the tuple
$(\rv(x - c))_{c \in C}$; i.e., if $\rv(x - c) = \rv(x' - c)$ for all $c \in C$, then $W_x = W_{x'}$.
\end{lem}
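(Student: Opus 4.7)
My plan is to reduce to the $k=0$ case (where $W \subseteq K$) --- which is essentially the definition of $0$-h-minimality once one allows arbitrary $\RV_K$-parameters in the set being prepared --- and then promote the resulting fiberwise preparation to a uniform preparation via a short compactness argument.

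First, I would dispatch the $k=0$ case: if the formulation of $0$-h-minimality in \cite[Definition~1.2.3]{CHR-hmin} only permits a single $\RV$-parameter in $W$, I would extend it to arbitrary finite tuples of $\RV$-parameters by induction on the number of parameters, either by iteratively invoking the definition or by coding a pair $(\xi_1, \xi_2) \in \RV_K^2$ as a single parameter in an appropriate $\emptyset$-definable subsort. This reduces the $k=0$ case of the lemma to a direct consequence of the definition.

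Second, for general $k$, I would argue by contradiction in a sufficiently saturated elementary extension. Suppose no finite $A$-definable $C$ prepares $W$ in the asserted sense. By saturation, one can find $x, x' \in K$ and $\eta \in \RV_K^k$ such that: (i) $(\rv(x-c))_{c \in C} = (\rv(x'-c))_{c \in C}$ for every finite $A$-definable $C \subset K$; and (ii) $(x, \eta) \in W$ while $(x', \eta) \notin W$. Now apply the $k=0$ case to the fiber $W^\eta := \{y \in K : (y, \eta) \in W\}$, which is $(A \cup \RV_K)$-definable through the parameter $\eta$: this yields a finite $A$-definable set $C^* \subset K$ preparing $W^\eta$. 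Specializing (i) to $C := C^*$ places $x$ and $x'$ in the same $C^*$-class for the $\rv$-relation, so the preparation property forces $x \in W^\eta \Leftrightarrow x' \in W^\eta$, contradicting (ii).

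The hard part is the subtle point inside the $k=0$ case: one must verify that the definition of $0$-h-minimality really does deliver an $A$-definable preparation set $C^*$ for the set $W^\eta$, even though $W^\eta$ is specified using the $\RV$-parameter $\eta$. This asymmetry --- unrestricted $\RV$-parameters permitted in the set being prepared, but none allowed in the preparation set --- is the defining feature of $0$-h-minimality and is the only non-formal input to the argument; everything else is routine compactness and parameter bookkeeping.
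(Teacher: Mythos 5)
The paper does not actually prove Lemma~\ref{lem.hmin.equi}: it is quoted from \cite[Proposition~2.6.2]{CHR-hmin}, and the surrounding text explicitly offers it as a statement one may take as the \emph{definition} of $0$-h-minimality (the genuine definition being the case $k=0$). So there is no in-paper argument to compare with; what you have written is a proof of the cited result itself, from the $k=0$ case. On its merits, your argument is correct and is essentially the standard compactness derivation of the family version: assuming no finite $A$-definable $C$ prepares $W$, the partial type in $(x,x',\eta)$ asserting $\rv(x-c)=\rv(x'-c)$ for all $c$ in every finite $A$-definable set, together with $(x,\eta)\in W$ and $(x',\eta)\notin W$, is finitely satisfiable because a finite union of finite $A$-definable sets is again finite and $A$-definable and by assumption fails to prepare $W$ (the $\rv$-agreement condition is symmetric in $x,x'$, so one may orient the witnesses); realizing it in a sufficiently saturated model and then preparing the single fiber $W^\eta$, which is $(A\cup\RV_K)$-definable, by a finite $A$-definable $C^*$ yields the contradiction. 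Two small points. First, the hedge in your opening step is unnecessary: \cite[Definition~1.2.3]{CHR-hmin} already allows arbitrary parameters from $\RV_K$ in the set being prepared while requiring the preparing set to be $A$-definable; the count $\ell$ only restricts additional parameters from the finer leading-term structures $\RV_{\lambda,K}$. Had a reduction been needed, the proposed coding of $\RV_K^2$ into a single $\RV$-parameter would not be available in general (there is no definable pairing on $\RV$), and a naive induction also does not obviously go through since an $\RV$-parameter cannot be absorbed into the field-sort parameter set $A$; fortunately none of this is needed. Second, for completeness one should record that the failure hypothesis transfers to the saturated elementary extension (for a fixed finite $A$-definable $C$, ``$C$ prepares $W$'' is a first-order condition over the parameters, and finite $A$-definable sets do not change when passing to an elementary extension), and that the parenthetical ``without loss non-empty'' is trivial (enlarge $C$ by $\{0\}$). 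Neither point affects the validity of your argument, which gives a self-contained justification of exactly the statement the paper imports.
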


\begin{lem}[{\cite[Corollary~2.3.2]{CHRV-hmin2}}]\label{lem.hmin.mix}
Suppose that $K$ is a valued field of mixed characteristic with $0$-\hmix-minimal theory, that $A \subseteq K$ is an arbitrary (parameter) set, and that $W \subseteq K \times \RV_{|m'|,K}^k$ is $(A \cup \RV_{|m'|,K})$-definable, for some integer $m' \ge 1$. Then there exists a finite $A$-definable set $C \subset K$ and an integer $m \ge 1$ such that, for all $x \in K$, the fiber $W_x \subseteq \RV_{|m'|,K}^k$ only depends on the tuple
$(\rv_{|m|}(x - c))_{c \in C}$.
\end{lem}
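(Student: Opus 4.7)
The plan is to derive the general statement (arbitrary $k$) from its $k=0$ specialisation, which is essentially the definition of $0$-\hmix-minimality as noted in the preceding paragraph. The reduction is a standard compactness argument over the ``fibre index'' $\eta \in \RV_{|m'|,K}^k$.

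First I would reformulate the conclusion: letting $E$ denote the equivalence relation on $K$ given by $x \mathbin{E} x' \iff W_x = W_{x'}$, the statement asks for finite $A$-definable $C \subset K$ and integer $m \ge 1$ such that the equivalence relation $x \sim_{C,m} x'$ of $(\rv_{|m|}(\,\cdot\, - c))_{c \in C}$-equality refines $E$. Assuming no such $C, m$ exist, I would set up the partial type $\pi(x, x', \eta)$ over $A \cup \RV_{|m'|,K}$ asserting that $\eta \in \RV_{|m'|,K}^k$, that $\eta \in W_x \mathbin{\triangle} W_{x'}$, and that $x \sim_{C,m} x'$ for every finite $A$-definable $C \subset K$ and every integer $m \ge 1$. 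The negation-of-conclusion assumption makes each finite fragment of $\pi$ realised in $K$ itself (by taking the union of the finitely many $C$'s and the maximum of the finitely many $m$'s), so $\pi$ is finitely satisfiable, hence consistent, and realised by some triple $(x, x', \eta)$ in an elementary extension $K^* \succ K$.

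I would then derive a contradiction as follows. The set $X_\eta := \{y \in K^* : \eta \in W_y\}$ is $(A \cup \RV_{|m'|,K^*})$-definable (since $\eta$ itself lies in $\RV_{|m'|,K^*}$), so applying the $k=0$ case of the lemma inside $K^*$ (valid because $0$-\hmix-minimality is a property of $\Th(K)$) yields a finite $A$-definable $C_0 \subset K$ and an integer $m_0 \ge 1$ such that $X_\eta$ is a union of $\sim_{C_0, m_0}$-classes. But the clause $x \sim_{C_0, m_0} x'$ belongs to $\pi$, so $x \in X_\eta \iff x' \in X_\eta$, contradicting $\eta \in W_x \mathbin{\triangle} W_{x'}$. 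This shows $\pi$ is in fact inconsistent, and so by compactness finitely many of the $\sim_{C,m}$-clauses already force a contradiction; the union of those finitely many $C$'s together with the maximum of the corresponding $m$'s gives the required $C$ and $m$.

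The one genuinely non-trivial point is that the $k=0$ case must furnish an \emph{$A$-definable} finite $C_0$ despite $X_\eta$ being defined with an $\RV$-parameter; this absorption of $\RV$-parameters is precisely the content of $0$-\hmix-minimality (it is what distinguishes it from a weaker cellular-decomposition statement) and is exactly the feature that lets the compactness step close. The equi-characteristic Lemma \ref{lem.hmin.equi} would be proved by the identical argument, with every $\rv_{|m|}$ replaced by $\rv$ and the integer $m$ suppressed throughout.
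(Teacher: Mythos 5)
The paper gives no proof of this lemma: it is imported wholesale from \cite[Corollary~2.3.2]{CHRV-hmin2}, and the only argument supplied is the remark that follows it, which reduces the stated $(A \cup \RV_{|m'|,K})$-definable case to the $A$-definable case of the citation by absorbing the $\RV_{|m'|}$-parameters of $W$ into extra $\RV_{|m'|}$-coordinates. Your route is genuinely different: you rederive the family statement from its $k=0$ specialisation (which the paper explicitly allows to be taken as the definition of $0$-\hmix-minimality) by compactness over the fibre index $\eta$. This is essentially how such family versions are obtained in the source papers (compare the proof of \cite[Proposition~2.6.2]{CHR-hmin}), and it buys self-containedness; moreover, since the $k=0$ case already tolerates $\RV_{|m'|}$-parameters in the definition of the set being prepared, your argument handles the parameters of $W$ automatically, so the paper's accompanying remark becomes unnecessary. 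The argument itself is sound: under the negation of the conclusion the type $\pi$ is finitely satisfiable; for a realisation $(x,x',\eta)$ in $K^* \succ K$ the set $X_\eta$ is $(A \cup \RV_{|m'|,K^*})$-definable, the resulting finite $A$-definable set $C_0$ lies in $K$ because $K \preceq K^*$ and $C_0$ is finite (so the clause $x \sim_{C_0,m_0} x'$ really is one of the clauses of $\pi$), and the contradiction plus compactness yields the conclusion.

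One repair is needed in the bookkeeping of the integers $m$: when you combine finitely many pairs $(C_i,m_i)$, taking the maximum of the $m_i$ does not work. What is required is $|m| \le |m_i|$ for all $i$, i.e.\ $v_p(m) \ge v_p(m_i)$, and the largest integer need not have the largest $p$-adic valuation; for instance $m_1 = p^2$ and $m_2 = p^2+1$ have maximum $p^2+1$, which is a unit, so $\rv_{|p^2+1|} = \rv$ does not refine $\rv_{|p^2|}$. Take instead the product (or least common multiple) of the $m_i$, both in the finite-satisfiability check and in the final extraction of $C$ and $m$; with that one-word change the proof goes through.
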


\begin{rmk}
In \cite[Corollary~2.3.2]{CHRV-hmin2}, $W$ is required to be $A$-definable. However, if $\phi(x, a', a)$ defines $W \subseteq K \times \RV_{|m'|,K}^k$ for some $a' \in \RV_{|m'|}^\ell$ and $a \subset A$, then one can apply the original version of \cite[Corollary~2.3.2]{CHRV-hmin2} to the subset of $K \times \RV_{|m'|,K}^{k+\ell}$ defined by $\phi(x, y, a)$.
\end{rmk}

Note that if in Lemma~\ref{lem.hmin.mix}, one takes $K$ of equi-characteristic $0$, then $|m'| = |m| = 1$, so one just gets back Lemma~\ref{lem.hmin.equi}. In a similar way, 
equi-characteristic 0 and mixed characteristic could be treated together in this entire note.
However, since the latter is more technical, we will often explain the equi-characteristic 0 case first.

Any field $K$ as in Theorem~\ref{t.sphc.ex} is \emph{definably spherically complete}, i.e., every definable (with parameters) chain of balls $B_q \sseteq K$ has non-empty intersection, where $q$ runs over an arbitrary (maybe imaginary) definable set $Q$. In equi-characteristic $0$, this is just \cite[Lemma~2.7.1]{CHR-hmin}, and in general, it follows \emph{a posteriori} from Theorem~\ref{t.sphc.ex}. We nevertheless give a quick separate proof in the mixed characteristic case:

\begin{prop}\label{prop:dsc-mix}
If $K$ is a finitely ramified valued field of mixed characteristic with $0$-\hmix-minimal theory, then it is definably spherically complete.
\end{prop}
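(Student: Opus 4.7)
The plan is to argue by contradiction. Suppose $(B_q)_{q \in Q}$ is a definable chain of balls in $K$ with $\bigcap_q B_q = \emptyset$. Since any chain with a smallest ball has non-empty intersection, I may assume no smallest ball exists, and hence that the radii $\lambda_q \in \Gamma_K^\times$ strictly decrease along $Q$.

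The first step is to invoke finite ramification to force $\lambda_\infty := \inf_q \lambda_q = 0$. By $|p|^n$-translation, the hypothesis ``$\Gamma_K \cap [|p|, 1]$ is finite'' propagates to every slice $\Gamma_K \cap [|p^{n+1}|, |p^n|]$; since the powers of $|p|$ are cofinal in $\Gamma_K^\times$ in both directions, any bounded interval $[\lambda_\infty, \lambda_0] \subseteq \Gamma_K^\times$ with $\lambda_\infty > 0$ is a finite union of such slices and hence finite. A strictly decreasing sequence inside a finite set is finite, contradicting that our chain has no smallest ball; so $\lambda_\infty = 0$.

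Next, I apply Lemma~\ref{lem.hmin.mix} to $W := \{(x, q) : x \in B_q\}$, after lifting the parameter $q$ definably into some $\RV_{|m'|,K}^k$ by reindexing via the radius and choosing a definable section of $\RV_{|m'|,K} \to \Gamma_K$. This produces a finite $A$-definable $C \subseteq K$ and an integer $m \geq 1$ such that the profile $W_x = \{q : x \in B_q\}$ depends only on the tuple $(\rv_{|m|}(x - c))_{c \in C}$; equivalently, setting $\delta(x) := \min_{c \in C} |x - c|$, the profile $W_x$ is constant on the open ball $V_x := B_{<|m|\delta(x)}(x)$.

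To finish, pick $q^* \in Q$ far enough in the chain that $C \cap B_{q^*} = \emptyset$ (possible, since $C$ is finite and each $c \in C$ is outside some $B_q$), and let $a_{q^*}$ be a center of $B_{q^*}$. For any $x \in B_{q^*}$, strict ultrametricity gives $|x - c| = |a_{q^*} - c|$ for every $c \in C$, so $\delta(x) = \mu^* := \min_{c \in C}|a_{q^*} - c| > 0$; moreover $\mu^*$ is stable in $q^*$ by a further ultrametric comparison between centers. Constancy of $W_x$ on $V_x$ together with $x \in B_{q^*}$ forces $V_x \subseteq B_{q^*}$, i.e.\ $|m|\mu^* \leq \lambda_{q^*}$; letting $q^*$ vary cofinally yields $|m|\mu^* \leq \lambda_\infty = 0$, hence $\mu^* = 0$, a contradiction. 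The main obstacle I anticipate is the factor $|m|$ in Lemma~\ref{lem.hmin.mix} (trivial in equi-characteristic $0$ but not here): it shrinks $V_x$ by $|m|$ and prevents the bare lemma-based argument from closing, and finite ramification is precisely what bridges the gap, by eliminating a potentially positive $\lambda_\infty$ against which the shrinkage would otherwise hide.
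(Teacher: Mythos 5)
Your proposal follows the same core strategy as the paper: apply Lemma~\ref{lem.hmin.mix} to obtain a finite $C$ and an integer $m\ge 1$, pick a ball $B_{q^*}$ in the chain disjoint from $C$, and use the constancy of $W_x$ on $V_x:=B_{<|m|\delta(x)}(x)$ to derive the lower bound $|m|\mu^*\le\lambda_{q'}$ for all subsequent radii. This is exactly the paper's claim that every $q'\in Q$ satisfies $q'\ge q\cdot|m|$, and the supporting computations (stability of $\mu^*$ across the chain, $V_x\subseteq B_{q^*}$) are correct.

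However, your ``first step'' asserting that finite ramification forces $\lambda_\infty=0$ is a genuine gap. Finite ramification says that $\Gamma_K\cap[|p|,1]$ is finite, but this does \emph{not} imply that powers of $|p|$ are cofinal in $\Gamma_K^\times$: for example, $\QQ_p((t))$ with the composite valuation is finitely ramified (ramification index~$1$) and $0$-\hmix-minimal, yet its value group is $\ZZ\times\ZZ$ lexicographically ordered with $|p|$ lying in the smaller convex subgroup, so $|p|^n$ never drops below $|t|$. Consequently a bounded interval $[\lambda_\infty,\lambda_0]$ with $\lambda_\infty>0$ need not be a finite union of $|p|$-slices, and you cannot conclude $\lambda_\infty=0$. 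The good news is that this step is unnecessary: once you have $|m|\mu^*\le\lambda_{q'}\le\lambda_{q^*}\le\mu^*$ for all $q'$ beyond $q^*$, finite ramification applied directly to the bounded interval $[|m|\mu^*,\mu^*]$ (which \emph{is} a finite union of $|p|$-slices, since $|m|$ is a power of $|p|$) already shows that the strictly decreasing sequence of radii lies in a finite set, hence the chain terminates — contradicting the no-smallest-ball hypothesis. That is precisely how the paper closes the argument. A further minor point: a ``definable section'' of $\RV_{|m'|,K}\to\Gamma_K$ need not exist; the paper avoids this by coding the chain as $W_x:=\{\xi\in\RV_K : x\in B_{|\xi|}\}\subseteq\RV_K$, using the norm map $|\cdot|\colon\RV_K\to\Gamma_K$ rather than a section.
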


\begin{proof}
For each $\gamma \in \Gamma_K$, consider the unique open ball $B'_{<\gamma}$ of open radius $\gamma$ that contains some ball $B_q$, for some $q \in Q$, if it exists; let $Q' \subseteq \Gamma_K$ be the set of those $\gamma$ for which $B^\prime_{<\gamma}$ exists. Since the chains $(B_q)_{q \in Q}$ and $(B'_{<\gamma})_{\gamma \in Q'}$ have the same intersection, by working with the latter, we can, and do, assume without loss that $Q \subseteq \Gamma_K$ and that every $B_q$ is an open ball of radius $q$. 

Pick a finite set $C \subset K$ (using Lemma~\ref{lem.hmin.mix}) and an integer $m \ge 1$ such that for $x \in K$, the set $W_x := \{\xi \in \RV_K \mid x \in B_{|\xi|}\}$ only depends on the tuple $(\rv_{|m|}(x - c))_{c \in C}$. In particular, for every $q \in Q$,
if we have some $x' \in B_q$ and some $x \in K$ satisfying $\rv_{|m|}(x -  c) = \rv_{|m|}(x' - c)$ for every $c \in C$, then $x$ is also an element of $B_q$.

Suppose now that there exists a $q \in Q$ such that $B_q \cap C = \emptyset$. (Otherwise, $C$ contains an element of the intersection of all $B_q$ and we are done.)
We claim that every $q' \in Q$ satisfies $q' \ge q\cdot |m|$.
Since $K$ is finitely ramified, this claim implies that below $B_q$, the
chain is finite and hence has a minimum (which is then equal to the intersection of the entire chain).

To prove this claim, suppose for a contradiction that $q' < q\cdot |m|$. Pick any $x' \in B_{q'}$ and any $x \in K$ with $|x - x'| = q'$ (and hence $x \in B_q \setminus B_{q'}$). Then, for any $c \in C$ we have
$|c - x| \ge q$ (since $C \cap B_q = \emptyset$) and hence
$|c-x|\cdot|m| \ge q\cdot |m| > q' = |x - x'| = |(x -c) - (x'-c)|$.
Thus $\rv_{|m|}(x - c) = \rv_{|m|}(x' - c)$ for all $c \in C$, contradicting our choice of $C$ (and the fact that $x' \in B_{q'}$ and $x \notin B_{q'}$).
\end{proof}

Note that in mixed characteristic, the assumption of finite ramification is really necessary to obtain definable spherical completeness, as the following example shows.

\begin{exa}\label{exa:not-dsc}
Let $K$ be the algebraic closure of
$\mathbb{Q}$, considered as a valued field with the $p$-adic valuation. Fix any elements $a_n \in K$ ($n \in \NN_{\ge1}$) with $|a_n| = \lambda_n := |p|^{1-1/n}$ and set $a_I := \sum_{i \in I} a_i$ for $I \subset \NN_{\ge 1}$ finite. The balls $B_{\le\lambda_n}(a_I)$ ($n \in \NN_{\ge1}$, $I \subset \{1, \dots, n\}$) form an infinite binary tree (with $B_{\le\lambda_n}(a_I)$ containing
$B_{\le\lambda_{n+1}}(a_I)$ and $B_{\le\lambda_{n+1}}(a_{I\cup\{n\}})$), so since $K$ is countable, there exists a chain $B_n = B_{\le\lambda_n}(b_n)$ (where $b_n = a_{I_n}$ for suitable $I_n$) which has empty intersection. We fix such a chain.

Since $K$ is henselian, it is $\omega$-h$^{\mathrm{ecc}}$-minimal by \cite[Corollary~6.2.7]{CHR-hmin} and hence in particular 
$0$-\hmix-minimal (see the footnote on p.~\pageref{foo}).
Since each $B_n$ has a radius between $1$ and $|p|$, 
it is the preimage of a subset of the residue ring $\OO_K / B_{<|p|}(0) \subset \RV_{|p|}$, so we can turn the chain $(B_n)_{n \in \NN_{\ge 1}}$ into a definable family by expanding the language by a predicate on $\RV_{|p|}^2$ (e.g., $\{(\rv_{|p|}(c), \theta(n)) \mid n \in \NN_{\ge 1}, c \in B_n\}$,
%$\bigcup_{n}\rv_{|p|}(B_n) \times \{\theta(n)\}$,
for an arbitrary fixed injective map $\theta\colon \NN_{\ge 1} \to \RV_{|p|}$). By \cite[Proposition~2.6.5]{CHRV-hmin2},
$K$ stays $0$-\hmix-minimal when expanding the language by this predicate. However, it is not definably spherically complete in this expansion, as witnessed by the (now definable) chain $(B_n)_{n \in \NN_{\ge 1}}$.
\end{exa}

\section{Lemmas}

First we recall briefly a basic well-known lemma concerning the relationship between $\rv_{\lambda}$ and $\res_{\lambda}$, where $\res_{\lambda}\colon \OO_K \rightarrow \OO_K / B_{<\lambda}(0)$ is the canonical ring homomorphism onto the residue ring $\OO_K / B_{<\lambda}(0)$.

\begin{lem}
\label{lem.rv.res}
For all $a,a' \in \OO_K$ and all $\lambda \leq 1$ in $\Gamma^{\times}_K$,
\[
\rv_{\lambda}(a)=\rv_{\lambda}(a') \implies \res_{\lambda}(a)=\res_{\lambda}(a').
\]
\end{lem}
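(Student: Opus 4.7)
The plan is to just unpack the definitions of $\rv_\lambda$ and $\res_\lambda$, treating the zero case separately.

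First I would handle the trivial case: if $a=0$, then $\rv_\lambda(a)=0$, and since $\rv_\lambda^{-1}(0)=\{0\}$, the hypothesis forces $a'=0$ as well, whence $\res_\lambda(a)=\res_\lambda(a')=0$ (and symmetrically if $a'=0$).

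For the main case, assume $a,a'\in\OO_K\setminus\{0\}$. By definition of $\RV_{\lambda,K}=(K^\times/B_{<\lambda}(1))\cup\{0\}$, the assumption $\rv_\lambda(a)=\rv_\lambda(a')$ amounts to $a/a'\in B_{<\lambda}(1)$, i.e., $|a/a'-1|<\lambda$, which after multiplying by $|a'|$ gives
\[
|a-a'| \;<\; \lambda\cdot|a'|.
\]
Now I would invoke the hypothesis $a'\in\OO_K$, which gives $|a'|\le 1$, so $|a-a'|<\lambda$. This is precisely the statement that $a-a'\in B_{<\lambda}(0)$, i.e., $\res_\lambda(a)=\res_\lambda(a')$ in $\OO_K/B_{<\lambda}(0)$.

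There is no real obstacle here; the proof is essentially a line of bookkeeping. The only point worth flagging is that the implication relies on the assumption $a,a'\in\OO_K$ (so that $|a'|\le 1$): for elements of larger valuation, the ``$\rv_\lambda$-ball'' of radius $\lambda\cdot|a'|$ around $a'$ is strictly coarser than the additive ball of radius $\lambda$, and the implication fails in general. The converse implication also fails (e.g., $a=0$, $a'$ with $0<|a'|<\lambda$), which is why the statement is only a one-way implication.
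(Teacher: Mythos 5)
Your proof is correct and follows essentially the same route as the paper: unpack $\rv_\lambda(a)=\rv_\lambda(a')$ into the inequality $|a-a'|<\lambda\cdot|a'|$, use $|a'|\le 1$ to get $|a-a'|<\lambda$, and conclude that $a-a'\in B_{<\lambda}(0)$ so the residues agree. Your explicit treatment of the case $a=0$ is a small extra care the paper glosses over, but it is not a different argument.
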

\begin{proof}
Recall that $\rv_{\lambda}(a)=\rv_{\lambda}(a')$ if and only if $|a-a'| < \lambda |a|$. Suppose that $a,a' \in \OO_K$ and $\rv_{\lambda}(a)=\rv_{\lambda}(a')$. As $|a| \leq 1$, we have that $|a-a'| < \lambda$. Therefore $a-a' \in B_{<\lambda}(0)$ so $\res_{\lambda}(a-a') = 0$. Hence, as $\res_{\lambda}$ is a ring homomorphism, we have $\res_{\lambda}(a) = \res_{\lambda}(a')$.
\end{proof}

\begin{lem}
\label{lem.finite}
Suppose $K$ is a valued field of characteristic 0. Let $A \sset K$ be a finite subset of $K$.
\begin{itemize}
 \item In case $K$ has finite residue characteristic $p$, let $\ell$ be any natural number such that $\#A < p^{\ell}$ and set $\lambda := |p^\ell|$;
 \item Otherwise, when $K$ has residue characteristic $0$, set $\lambda :=1$.
\end{itemize}
In either case, for any $a,a' \in A$,
\[a=a' \iff \{\rv_{\lambda}(a-v) : v \in A\} = \{\rv_{\lambda}(a'-v) : v \in A\}.\]
\end{lem}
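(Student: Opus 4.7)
The plan is to prove the non-trivial direction $(\Leftarrow)$ by contradiction. Assume $a \ne a'$; I will construct too many distinct elements of $A$. Setting $b := a' - a$ with $|b| = \mu$, my first move is a reduction: for $v \in A$ with $|v - a| > \mu/\lambda$, the ultrametric inequality gives $\rv_\lambda(a - v) = \rv_\lambda(a' - v)$. Since the valuations of these ``outer'' contributions to both sides of the hypothesis strictly exceed $\mu/\lambda$ while the ``inner'' contributions have valuation $\leq \mu/\lambda$, the two ranges are disjoint and the set equality for $A$ reduces to set equality for $P := \{v \in A : |v - a| \leq \mu/\lambda\}$. Replacing $A$ by $P$ and then translating and rescaling (using $c \in K^\times$ with $|c| = \mu/\lambda$), I may assume $a = 0$, $a' = d$ with $|d| = \lambda$, and $A \subseteq \OO_K$. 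Cancelling a factor $\rv_\lambda(-1)$, the hypothesis becomes $\rv_\lambda(A) = \rv_\lambda(A - d)$.

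The key step is an iteration. Since $\rv_\lambda(v - d) \in \rv_\lambda(A)$ for every $v \in A$, I pick a choice function $\phi : A \to A$ with $\rv_\lambda(\phi(v)) = \rv_\lambda(v - d)$ and set $u_k := \phi^k(0)$. The inductive claim is
\[
 u_k = -d(k + \eta_k) \text{ with } |\eta_k| < \lambda,
\]
for $k = 0, 1, \ldots, p^\ell - 1$ (or, in equi-characteristic $0$, all $k \in \NN$ with $\lambda = 1$). For the inductive step, $u_k - d = -d(k + 1 + \eta_k)$ is nonzero because $|k+1| \geq |p|^{\ell - 1} > \lambda > |\eta_k|$, so $u_{k+1} = -d(k+1+\eta_k)(1+\epsilon_{k+1})$ for some $|\epsilon_{k+1}| < \lambda$, and one computes $\eta_{k+1} = \eta_k + (k+1)\epsilon_{k+1} + \eta_k\epsilon_{k+1}$ with $|\eta_{k+1}| < \lambda$, using $|k+1| \leq 1$.

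To conclude, if $u_k = u_{k'}$ for distinct $k, k'$ in the allowed range then $k - k' = \eta_{k'} - \eta_k$. But in mixed characteristic the left-hand side has valuation at least $|p|^{\ell - 1} > \lambda$ while the right-hand side has valuation strictly less than $\lambda$ (and in equi-characteristic $0$, the left side has valuation $1$ while the right lies in the maximal ideal), a contradiction. Hence the $u_k$ give $p^\ell$ distinct elements of $A$ in the mixed case (respectively infinitely many in the equi-characteristic case), contradicting $\#A < p^\ell$ (respectively the finiteness of $A$).

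The main obstacle I expect is maintaining the error bound $|\eta_k| < \lambda$ throughout the iteration. This bound is preserved precisely because $\lambda = |p^\ell|$ makes $|k+1| \leq 1$ strong enough to absorb the ``$(k+1)\epsilon_{k+1}$'' term, and the choice of $\ell$ with $\#A < p^\ell$ guarantees that the iteration produces genuinely new elements throughout the whole range $\{0, \ldots, p^\ell - 1\}$ before any risk of cycling.
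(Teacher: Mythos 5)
Your proof is correct and is essentially the paper's argument: both use the set equality to iteratively choose elements of $A$ whose differences from the base point track $0,d,2d,\dots$ up to an error of size $<\lambda$, and then conclude that $u_0,\dots,u_{p^\ell-1}$ (resp.\ infinitely many $u_k$ in residue characteristic $0$) are distinct because $|k-k'|\ge |p|^{\ell-1}>\lambda$, contradicting $\#A<p^\ell$; the paper does the same bookkeeping additively via $\res_\lambda$ after normalizing $a_1-a_2=1$, while you use multiplicative error terms $\eta_k,\epsilon_k$. Your preliminary reduction to the ``inner'' set $P$ and the rescaling to $|d|=\lambda$ are correct but not actually needed, since the whole statement is translation- and scaling-invariant and the iteration never uses $A\subseteq\OO_K$.
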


\begin{proof}\renewcommand{\qedsymbol}{$\square$ (Lemma \ref{lem.finite})}
In case $A$ is empty or a singleton, the lemma is trivial. So we continue under the assumption that $\#A \geq 2$.

To each $a \in A$, associate the finite subset $D_a \sseteq \RV_{\lambda}$ defined by
\[
D_a := \{\rv_{\lambda}(a-v) : v \in A\},
\]
for $\lambda$ as in the statement of the lemma.

Suppose, towards a contradiction to the lemma, that there are $a_1 \neq a_2$ in $A$ such that $D_{a_1} = D_{a_2}$. Without loss of generality, we assume that $a_1-a_2 = 1$. This assumption can be made without loss as $\rv_{\lambda}$ is multiplicative; so dividing every element of $A$ by $a_1-a_2$ allows us to reduce to the case where $a_1-a_2 = 1$.

With this assumption we have that 
\[
\rv_{\lambda}(a_1 - a_2) = \res_{\lambda}(a_1 - a_2) = \res_{\lambda}(1) = 1.
\]
\begin{claim*}
Assuming the above, in particular that $a_1 \neq a_2$ but $D_{a_1}=D_{a_2}$, we find, for every natural number $i \ge 3$, an element $a_i$ in $A$,  satisfying
\begin{enumerate}
    \item $|a_i-a_1| \leq 1$ and
    \item  $\res_{\lambda}(a_1-a_i)=\res_{\lambda}(i-1)$.
\end{enumerate}
\end{claim*}
(Note that the condition already holds for $i = 1, 2$.)
\begin{proof}[Proof of the claim]\renewcommand{\qedsymbol}{$\square$ (Claim)}
Assume inductively that for some $i \in \NN$ we have found such an $a_i \in A$. By the definition of $D_{a_1}$ and the assumption that $D_{a_1} = D_{a_2}$, we have that 
\[
\rv_{\lambda}(a_1 - a_i) \in D_{a_1} = D_{a_2}.
\]
Therefore there exists some $a_{i+1} \in A$ such that $\rv_{\lambda}(a_2-a_{i+1}) = \rv_{\lambda}(a_1-a_{i})$.

First note that this implies $|a_2-a_{i+1}|=|a_1-a_{i}| \leq 1$; the last inequality by property in part 1 of the claim for $a_i$. Then, applying the ultrametric inequality, we calculate 
\[
|a_1-a_{i+1}| = |a_1 - a_2 + a_2 - a_{i+1}| \leq \max\{|a_1 - a_2|,|a_2 - a_{i+1}|\} \leq 1.
\]
This establishes that $a_{i+1}$ satisfies part 1 of the claim. 

Now by Lemma \ref{lem.rv.res}, the fact that $\rv_{\lambda}(a_2-a_{i+1}) = \rv_{\lambda}(a_1-a_{i})$ implies that $\res_{\lambda}(a_2-a_{i+1}) = \res_{\lambda}(a_1-a_{i})$. By the inductive assumption on $a_i$ we have $\res_{\lambda}(a_1-a_{i}) = i-1$, so $\res_{\lambda}(a_2-a_{i+1}) = i-1$. Now as $\res_{\lambda}(a_1-a_2) = 1$, we have
\[
\res_{\lambda}(a_1-a_{i+1})=\res_{\lambda}(a_1-a_2+a_2-a_{i+1})=\res_{\lambda}(a_1-a_2)+\res_{\lambda}(a_2-a_{i+1}) = i.
\]
This establishes that $a_{i+1}$ also satisfies part 2 of the claim. Hence, by induction, we have proved the claim.
\end{proof}

To finish the proof, it remains to verify, for some $N > \# A$, that all the elements \\
$\res_\lambda(0)$,~\dots,~$\res_\lambda(N-1)$ are distinct.  Since then, part 2 of the claim implies that $a_1, \dots, a_N$ are distinct, too, contradicting $N > \# A$; the contradiction going back to the assumption that $D_{a_1}=D_{a_2}$ while $a_1 \neq a_2$.

If $K$ is of residue characteristic 0 then we have set $\lambda = 1$, and clearly the elements of $\mathbb{N} \sseteq K$ have different residues in the residue field.
In case $K$ has finite residue characteristic $p$, recall that we chose $\ell$ so that $\#A < p^{\ell} =: N$ and that we have set $\lambda := |p^\ell|$. In particular, we have
$B_{<\lambda}(0) \sseteq p^{\ell} \OO_K$, so $0,1,...,p^{\ell}-1$ have different residues in the residue ring $\OO_K / B_{<\lambda}(0)$. This finishes the proof of the lemma.
\end{proof}

 \begin{defn}\label{defn.pcl}
Suppose that $K \subseteq M$ are valued fields.
An element $\alpha \in M$ is called a \emph{pcl} over $K$ (which stands for ``pseudo Cauchy limit'')
if for all $a \in K$ there exists an $a' \in K$ such that $|\alpha - a'| < |\alpha - a|$.
 \end{defn}

Note that if $\alpha$ is a pcl over $K$, then we can find an $a' \in K$ with $|\alpha - a'| < |\alpha - 0|$ and hence
$\rv(\alpha) = \rv(a')$. In particular, we have $\rv(\alpha) \in \RV_K$.

\begin{lem}\label{lem.emptydef}
Assume $K$ is an expansion of a valued field of characteristic 0 and either: 
\begin{itemize}
    \item $K$ has residue characteristic 0 and $\Th(K)$ is $0$-h-minimal; or
    \item $K$ is finitely ramified with finite residue characteristic and $\Th(K)$ is $0$-\hmix-minimal.
\end{itemize}
For $K$ of residue characteristic 0, set $\lambda := 1$; in case $K$ has finite residue characteristic $p$, let $\ell$ be any non-negative integer and set $\lambda := |p^\ell|$. 
Suppose that
\begin{itemize}
 \item $M$ is an elementary extension of $K$,
 \item $\alpha \in M$ is a pcl over $K$, and
 \item $(W_x)_{x\in M}$ is a $\emptyset$-definable family of subsets of $\RV_{\lambda,M}^n$ (for some $n$).%, where $x$ runs over $M$.
\end{itemize}
Then there exists some $a \in K$ such that $W_\alpha = W_a$. In particular, if the language contains constants for all elements of $K$, then $W_\alpha$ is $\emptyset$-definable.

\end{lem}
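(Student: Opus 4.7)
My plan is to first apply the relevant $h$-minimality lemma to reduce the problem to finding $a \in K$ close to $\alpha$, and then to use the pcl hypothesis to produce such an $a$.

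Concretely, I would apply Lemma~\ref{lem.hmin.equi} (in residue characteristic~$0$) or Lemma~\ref{lem.hmin.mix} (in mixed characteristic) to the $\emptyset$-definable family $W \sseteq M \times \RV_{\lambda,M}^n$ (with parameter set $A = \emptyset$), yielding a finite $\emptyset$-definable set $C \sseteq M$ and, in mixed characteristic, an integer $m \ge 1$; I would set $\mu := 1$ or $\mu := |m|$ accordingly. Since $C$ is finite and $\emptyset$-definable and $K \prec M$, I get $C \sseteq K$. By the lemma, $W_x$ depends only on the tuple $(\rv_\mu(x-c))_{c \in C}$ for $x \in M$, so it will suffice to find $a \in K$ with $(\rv_\mu(a-c))_{c \in C} = (\rv_\mu(\alpha-c))_{c \in C}$. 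A short calculation with the definition of $\rv_\mu$ and the ultrametric inequality should show that this equality of tuples is equivalent to the single condition $|\alpha - a| < \mu \cdot \delta$, where $\delta := \min_{c \in C} |\alpha - c|$.

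To find such an $a$, I would iterate pcl starting from an element $c^* \in C$ realising the minimum $\delta$, producing a sequence $(a_n)_{n \ge 0}$ in $K$ with $|\alpha - a_0| < \delta$ and $|\alpha - a_{n+1}| < |\alpha - a_n|$ for all $n$. By the ultrametric (using strict decrease), $|\alpha - a_n| = |a_n - a_{n+1}| \in \Gamma_K^\times$. In residue characteristic~$0$ we would have $\mu = 1$, so $a := a_0$ already works. In the mixed characteristic case, finite ramification implies $\Gamma_K^\times$ is discretely ordered with uniformiser $\pi$ (the immediate predecessor of $1$), and moreover forces $|p|$---and hence $\mu$, which is a power of $|p|$ times a unit---to be a power of $\pi$. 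Strict decrease in the discrete group $\Gamma_K^\times$ then gives $|\alpha - a_n| \le \pi^{n+1}\delta$, and for $n$ large enough that $\pi^{n+1} < \mu$ this yields $|\alpha - a_n| < \mu\delta$, so $a := a_n$ works.

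The main obstacle I would expect is this final step: one has to leverage finite ramification both for discreteness of $\Gamma_K^\times$ and to guarantee that $\mu$ lies in the cyclic subgroup generated by $\pi$, so that iterated pcl really does eventually produce an element of $K$ close enough to $\alpha$ to match all the values $\rv_\mu(\alpha - c)$ simultaneously.
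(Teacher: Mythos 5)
Your proposal is correct and takes essentially the same route as the paper: apply Lemma~\ref{lem.hmin.equi} (resp.\ Lemma~\ref{lem.hmin.mix}) with $A=\emptyset$, conclude $C\subset K$ by elementarity and finiteness, and then iterate the pcl property finitely many times, using finite ramification to guarantee that after finitely many steps the distance to $\alpha$ drops below $|m|\cdot\min_{c\in C}|\alpha-c|$, which forces $\rv_{|m|}(a-c)=\rv_{|m|}(\alpha-c)$ for all $c\in C$. Your packaging of the last step via discreteness of $\Gamma^\times_K$ and the fact that $|p|$ (hence $|m|$) is a power of the immediate predecessor $\pi$ of $1$ is a correct, equivalent way of exploiting finite ramification; the paper argues directly that a sufficiently long strictly decreasing chain of values below $|\alpha-c|$ must fall below $|p^\nu|\cdot|\alpha-c|\le|m|\cdot|\alpha-c|$.
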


\begin{proof}\renewcommand{\qedsymbol}{$\square$ (Lemma \ref{lem.emptydef})}
First assume that $K$ is of equi-characteristic 0 and $\Th(K)=\Th(M)$ is $0$-h-minimal. By applying Lemma~\ref{lem.hmin.equi} in the valued field $M$ with the parameter set $A = \emptyset$, we obtain a finite $\emptyset$-definable subset $C$ of $M$ such that the fiber $W_x$ (for $x \in M$) only depends on the tuple $(\rv(x -c))_{c \in C}$. As $C$ is finite and $\emptyset$-definable, and as $K$ is an elementary submodel of $M$, we have $C \sset K$.

As $C \sset K$ and $\alpha$ is a pcl over $K$, for every $c \in C$, there exists $a' \in K$ such that $|\alpha - a'| < |\alpha - c|$. Hence as $C$ is finite, there exists some $a \in K$ such that for all $c \in C$ we have $\rv(a - c) = \rv(\alpha - c)$ . Hence by our choice of $C$, having the defining property given in Lemma \ref{lem.hmin.equi}, we have that $W_a = W_\alpha$ as required.

Suppose now that $K$ has finite residue characteristic $p$ and $\Th(K)=\Th(M)$ is 0-\hmix-minimal. Let $\lambda := |p^\ell|$ for some integer $\ell \geq 0$ as in the statement of the lemma. Now apply Lemma~\ref{lem.hmin.mix} in the valued field $M$ to the $\emptyset$-definable family $W_x$. This provides a finite $\emptyset$-definable set $C \sset M$ and the existence of an integer $m \geq 1$ such that the fiber $W_x $ (where $x \in M$) only depends on the tuple $(\rv_{|m|}(x -c))_{c \in C}$; so fix such an integer $m$. Again, as $C$ is finite and $\emptyset$-definable, and as $K$ is an elementary submodel of $M$, we have $C \sset K$.

Take any $c \in C$. As $\alpha$ is a pcl over $K$, for any positive integer $s$, there is a finite sequence $u_1,...,u_s$ of elements of $K$ such that $|\alpha - u_s| < \dots < |\alpha - u_1| < |\alpha - c|$.
Now since $M$ is finitely ramified, by taking $s$ to be a large enough positive integer and $\nu$ such that $|p^\nu|\le|m|$, we obtain that $|\alpha - u_s| < |p^\nu|\cdot|\alpha - c| \le |m|\cdot|\alpha - c|$. This implies that $\rv_{|m|}(\alpha - c) = \rv_{|m|}(u_s - c)$.
Since this works for each of the finitely many $c$ in $C$, we can find
an $a \in K$ such that
$\rv_{|m|}(\alpha - c) = \rv_{|m|}(a - c)$
for all $c \in C$. It then follows from the defining property of $C$ given by Lemma \ref{lem.hmin.mix} that
$W_a = W_\alpha$, as required.
\end{proof}

\section{Spherically complete models}

In this section, we prove the main result, Theorem~\ref{t.sphc.ex}. Throughout this section, we fix the valued field $K$ and work in an elementary extension $M \succ K$ (which we will at some point assume to be sufficiently saturated). Note that a field extension $L$ of $K$ is an immediate extension if and only if $\RV_{L} = \RV_K$.
 
We will deduce Theorem~\ref{t.sphc.ex} from the following proposition.
 
\begin{prop}\label{p.sphc.ex}
Suppose that $K \prec M$ are as in Theorem~\ref{t.sphc.ex}
(of equi-characteristic $0$ and $0$-h-minimal, or of mixed characteristic, finitely ramified, and $0$-\hmix-minimal).
Let $\alpha \in M$ be a pcl over $K$ (see Definition~\ref{defn.pcl}) and set $L := \acl_{\VF}(K, \alpha)$. Then we have $L \succ K$ and $\RV_{L} = \RV_K$.
\end{prop}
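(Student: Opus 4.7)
To show $\RV_L = \RV_K$ (and in mixed characteristic also $\RV_{|p^\ell|, L} = \RV_{|p^\ell|, K}$ for every $\ell \ge 0$), I will apply Lemma~\ref{lem.emptydef} directly. Writing $\lambda$ for $1$ (equi-characteristic $0$) or $|p^\ell|$ (mixed characteristic), pick $c \in L$ and a formula $\phi(y, x, \bar{k})$ with $\bar{k} \in K$ such that $\phi(M, \alpha, \bar{k})$ is finite and contains $c$. After enlarging the language by constants for $K$ (which preserves the minimality hypothesis by \cite[Proposition~2.6.5]{CHRV-hmin2}), the family
\[ V_x := \{\rv_{\lambda}(y) : M \models \phi(y, x, \bar{k})\} \subseteq \RV_{\lambda, M} \]
is $\emptyset$-definable, with $V_\alpha$ finite. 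Lemma~\ref{lem.emptydef} delivers $a \in K$ with $V_\alpha = V_a$; the set $V_a$ is then a finite $K$-definable subset of $\RV_{\lambda, M}$, hence contained in $\RV_{\lambda, K}$ by elementarity $K \prec M$ (the statement ``$V_a$ has exactly $n$ elements'' transfers between $M$ and $K$). Thus $\rv_{\lambda}(c) \in V_\alpha = V_a \subseteq \RV_{\lambda, K}$, and varying $c$ gives $\RV_{\lambda, L} \subseteq \RV_{\lambda, K}$, whence equality.

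\textbf{Tarski--Vaught setup for $L \succ K$.} I must show that every nonempty $L$-definable subset $X \subseteq M$ meets $L$. The parameters defining $X$ lie in $\acl_{\VF}(K\alpha)$, hence have only finitely many $\mathrm{Aut}(M/K\alpha)$-conjugates, whose union $X^* := \bigcup_\sigma \sigma(X)$ is $K\alpha$-definable and satisfies $X \cap L \neq \emptyset \iff X^* \cap L \neq \emptyset$ (each such $\sigma$ setwise preserves $L = \acl_{\VF}(K\alpha)$). Replacing $X$ by $X^*$, I may assume $X$ is $K\alpha$-definable. If $X$ is finite, then $X \subseteq \acl_{\VF}(K\alpha) = L$ and there is nothing further to prove. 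Otherwise write $X = \phi(M, \alpha, \bar{k})$ with $\bar{k} \in K$ and consider the $K$-definable family $X_x := \phi(M, x, \bar{k})$.

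\textbf{Infinite case.} The target is to produce $a \in K$ with $X_\alpha = X_a$; once this is achieved, the nonempty $K$-definable set $X_a$ meets $K$ by $K \prec M$, and any $y_0 \in X_a \cap K \subseteq L$ satisfies $y_0 \in X_a = X_\alpha = X$, providing the required $L$-witness. The plan for finding $a$: invoke a uniform form of Lemma~\ref{lem.hmin.mix}, namely that there exist a finite $K$-definable $C_0 \subseteq K$ and an integer $m \ge 1$ such that for every $x \in M$, membership ``$y \in X_x$'' depends only on the joint tuple $(\rv_{|m|}(y-c), \rv_{|m|}(x-c))_{c \in C_0}$. Given this uniformity, set $S_x := \{(\rv_{|m|}(y-c))_{c \in C_0} : y \in X_x\}$, a $K$-definable family of $\RV$-subsets; Lemma~\ref{lem.emptydef} then produces $a \in K$ with $S_\alpha = S_a$, and the uniform level-set description forces $X_\alpha = X_a$. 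The main obstacle is this uniform version: applied directly to the formula $\phi(y, \alpha, \bar{k})$, the stated one-variable Lemma~\ref{lem.hmin.mix} yields only a control set in $\acl_{\VF}(K\alpha) = L$ rather than in $K$. This uniformity can be arranged by applying Lemma~\ref{lem.hmin.mix} to suitable auxiliary $K$-definable families in $K \times \RV_{|m'|, K}^N$ obtained from $\phi$ by adjoining rv-data at $K$-test centers $c_1,\ldots,c_N$, and iterating until enough centers are incorporated into $C_0$. Once this is in place, the conclusion reduces to a clean combination of Lemma~\ref{lem.emptydef} with elementarity, and the finite-ramification hypothesis in mixed characteristic enters only through Lemma~\ref{lem.emptydef}.
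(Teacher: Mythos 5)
The immediacy half of your argument is essentially correct: reducing an element $c\in L=\acl_{\VF}(K,\alpha)$ to a finite fiber $\phi(M,\alpha,\bar k)$, applying Lemma~\ref{lem.emptydef} to the family $V_x$ of $\rv_\lambda$-images, and using that a finite $K$-definable subset of $\RV_{\lambda,M}$ lies in $\RV_{\lambda,K}$ is a clean route to $\RV_L=\RV_K$, close in spirit to the paper (which instead first proves $L=\dcl_{\VF}(K,\alpha)$ via Lemma~\ref{lem.finite} and then a Claim-B statement about $L$-definable subsets of the $\RV$-sorts). Two small points there: adding constants from $K$ is justified by \cite[Theorem~4.1.19]{CHR-hmin} and \cite[Lemma~2.3.1]{CHRV-hmin2} (the result you cite is the resplendency statement for predicates on $\RV$), and your later reduction of $L$-definable sets to $K\alpha$-definable ones via conjugates needs either a homogeneity/monster-model argument or the paper's stronger Claim~A that $L=\dcl_{\VF}(K,\alpha)$.

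The elementarity half, however, has a genuine gap. The ``uniform form of Lemma~\ref{lem.hmin.mix}'' you invoke is false: there is in general no single finite $K$-definable $C_0$ and $m\ge 1$ such that membership $y\in X_x$ depends only on $\bigl(\rv_{|m|}(y-c),\rv_{|m|}(x-c)\bigr)_{c\in C_0}$. Already the family $X_x=\{y: |y-x|\le |c_0|\}$ (balls of a fixed radius $|c_0|<1$ around $x$), with infinite fibers, defeats it: for $x$ with $|m|\cdot|x-c|>1$ for all $c\in C_0$, replacing $y=x$ by $y'=x+\delta$ with $|c_0|<|\delta|<|m|\cdot\min_c|x-c|$ changes membership without changing any of the $\rv_{|m|}$-data. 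Hensel minimality prepares definable subsets of the line by finitely many centers; it does not give this kind of product-type preparation of subsets of $K^2$ by one finite center set per variable. Worse, even your intermediate target is unattainable: there need not exist $a\in K$ with $X_\alpha=X_a$ at all. For the same family, $X_\alpha=X_a$ forces $|\alpha-a|\le|c_0|$, and nothing in the pcl hypothesis makes the distances $|\alpha-a|$, $a\in K$, drop below a prescribed $|c_0|$ (indeed, in the proof of the Theorem, $\alpha$ realizes a chain of balls whose radii may all exceed some fixed value). So the plan of pushing the whole fiber $X_\alpha$ down to a $K$-fiber cannot work. The paper's proof of the Tarski--Vaught step avoids this: it applies Lemma~\ref{lem.hmin.equi}/\ref{lem.hmin.mix} to $Y=X_\alpha$ over the parameter set $L$ itself, obtaining a finite preparing set $C\subset L$ (here $L=\acl_{\VF}(L)$ is used), so that $Y$ is a union of fibers of $\rho(y)=(\rv_\mu(y-c_i))_{i\le r}$; it then transfers only the $\RV$-image $\rho(Y)$ down to $K$ (it is $L$-definable, hence $K$-definable by the Claim-B statement, hence meets $(\RV_{\mu,K})^r$ by $K\prec M$), and finally produces a witness of the form $c_j+a\in L\cap Y$ with $c_j\in C\subset L$, $a\in K$, using that $\rho^{-1}(\xi)$ is a single ball $c_j+\rv_\mu^{-1}(\xi_j)$. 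This step --- keep the centers in $L$ and transfer only the $\RV$-data, rather than the set itself --- is the idea missing from your argument.
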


Here and in the following, $\acl_{\VF}$ means the field-sort part of the (relative model theoretic) algebraic closure inside the fixed extension $M$; we also use $\dcl_{\VF}$ in a similar way.

\begin{proof}[Proof of Proposition~\ref{p.sphc.ex}]\renewcommand{\qedsymbol}{$\square$ (Proposition \ref{p.sphc.ex})}
In the following, we work in the language with all elements from $K$ added as constants, so $\dcl_{\VF}(\emptyset) = K$
and $L = \acl_{\VF}(\alpha)$. Note that adding constants from the valued field to the language preserves
$0$-h-minimality and $0$-\hmix-minimality, by \cite[Theorem~4.1.19]{CHR-hmin} and \cite[Lemma~2.3.1]{CHRV-hmin2}.
(Alternatively, since we claimed that we only use Lemmas~\ref{lem.hmin.equi} and \ref{lem.hmin.mix} as definitions, note that the statements of these lemmas permit adding constants.)

We will prove the following (in this order):

\begin{claima*}
 $L = \dcl_{\VF}(\alpha)$.
\end{claima*}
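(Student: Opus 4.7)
The plan is to show the non-trivial inclusion $\acl_{\VF}(\alpha) \sseteq \dcl_{\VF}(\alpha)$. Fix $\beta \in \acl_{\VF}(\alpha)$, witnessed by a $\emptyset$-definable formula $\phi(x,y)$ (in the language expanded by $K$-constants) with $\beta \in \phi(M,\alpha)$ and $n := \#\phi(M,\alpha) < \infty$. After strengthening $\phi$ by the condition $\#\phi(M,y) = n$, I may assume $\#\phi(M,y) \in \{0, n\}$ for every $y \in M$. I then choose $\lambda \in \Gamma^{\times}_K$ compatible with both Lemma~\ref{lem.finite} and Lemma~\ref{lem.emptydef}: take $\lambda = 1$ if the residue characteristic is $0$, and $\lambda = |p^{\ell}|$ with $n < p^{\ell}$ in mixed characteristic.

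The central idea is to transport the ``internal $\rv_\lambda$-shape'' of the $n$-element set $\phi(M,\alpha)$ down to a finite set inside $K$, then reconstruct $\beta$ from it. To this end, I consider the $\emptyset$-definable family
$$
W_y := \{(\rv_\lambda(b - c_1), \ldots, \rv_\lambda(b - c_n)) : b \in \phi(M,y),\ (c_1,\ldots,c_n) \text{ enumerates } \phi(M,y)\} \sseteq \RV_{\lambda,M}^n.
$$
Since $\alpha$ is a pcl over $K$, Lemma~\ref{lem.emptydef} yields some $a \in K$ with $W_a = W_\alpha$. Non-emptiness of $W_\alpha$ then forces $\#\phi(M,a) = n$, and $\phi(M,a) \sset K$ because $K \prec M$.

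Next, I define a map $g \colon \phi(M,a) \to \phi(M,\alpha)$ by setting $g(b) := b'$, where $b' \in \phi(M,\alpha)$ is the unique element satisfying $\{\rv_\lambda(b-v) : v \in \phi(M,a)\} = \{\rv_\lambda(b'-v') : v' \in \phi(M,\alpha)\}$ (equality of sets in $\RV_{\lambda,M}$). Existence of some such $b'$ for each $b$ follows from $W_a = W_\alpha$ by matching one of the tuples in $W_a$ with one in $W_\alpha$; uniqueness of $b'$ follows from Lemma~\ref{lem.finite} applied to $\phi(M,\alpha)$; and injectivity of $g$ follows from Lemma~\ref{lem.finite} applied to $\phi(M,a)$. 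Since $\#\phi(M,a) = \#\phi(M,\alpha) = n$, the injection $g$ is a bijection. Because $g$ is definable from $\alpha$ and the elements of $\phi(M,a) \sset K$ (all of which are constants in our language), every element of $\phi(M,\alpha) = g(\phi(M,a))$ lies in $\dcl_{\VF}(\alpha)$; in particular $\beta \in \dcl_{\VF}(\alpha)$.

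I expect the main subtlety to be purely combinatorial bookkeeping: picking $\lambda$ so that a single $\ell$ simultaneously satisfies $n < p^{\ell}$ (required for Lemma~\ref{lem.finite} to rigidify both $n$-element sets) and $\lambda = |p^{\ell}|$ (required for Lemma~\ref{lem.emptydef} in mixed characteristic). This is achieved by taking $\ell$ large enough; once that choice is pinned down, everything else is a direct application of the two lemmas.
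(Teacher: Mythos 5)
Your proof is correct and rests on the same two ingredients as the paper's argument: Lemma~\ref{lem.finite}, which distinguishes the elements of the finite set $\phi(M,\alpha)$ by their sets of $\rv_\lambda$-differences, and Lemma~\ref{lem.emptydef}, which uses the pcl property to descend the relevant $\RV_\lambda$-data to $K$ (with the same choice of $\lambda=|p^\ell|$, $n<p^\ell$, serving both lemmas). The only difference is bookkeeping: the paper defines $\beta$ by a formula with the finitely many parameters $D_\beta\subseteq\RV_{\lambda,L}$ and then shows $D_\beta\subseteq\RV_{\lambda,K}$ by applying Lemma~\ref{lem.emptydef} to the family of pairwise-difference sets, whereas you apply it to the family of difference tuples to obtain a set $\phi(M,a)\subset K$ and a definable bijection onto $\phi(M,\alpha)$; both routes reach the same conclusion.
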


\begin{claimb*}
 Suppose that $\mu \in \Gamma_K$ is either equal to $1$, or, if the residue characteristic of $K$ is $p$,
 $\mu=|p^\nu|$ for some $\nu \in \NN$.
 Then every subset of $(\RV_{\mu,M})^n$ (for any $n$) that is definable with parameters from $L$ is $\emptyset$-definable.
\end{claimb*}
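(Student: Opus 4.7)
The plan is to combine Claim~A, which gives $L = \dcl_{\VF}(\alpha)$, with Lemma~\ref{lem.emptydef}. The idea is that a set definable with parameters from $L$ can be realized as the $\alpha$-fiber of an $\emptyset$-definable family parameterized by a variable running over $M$, and then Lemma~\ref{lem.emptydef} produces a parameter in $K$ (hence a constant in the current language) with the same fiber.

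In detail, suppose $W \subseteq (\RV_{\mu,M})^n$ is definable with parameters $b_1, \dots, b_k \in L$, say by a formula $\phi(\bar{y}, b_1, \dots, b_k)$. By Claim~A, each $b_i$ lies in $\dcl_{\VF}(\alpha)$, so for each $i$ there is an $\emptyset$-definable partial function $f_i$ with $b_i = f_i(\alpha)$. Setting
\[
W_x := \{\bar{y} \in (\RV_{\mu,M})^n : f_1(x), \dots, f_k(x) \text{ are defined and } \phi(\bar{y}, f_1(x), \dots, f_k(x))\},
\]
we obtain an $\emptyset$-definable family $(W_x)_{x \in M}$ with $W_\alpha = W$.

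Since $\mu$ is either $1$ or of the form $|p^\nu|$ for some $\nu \in \NN$, it matches the range of $\lambda$ allowed in Lemma~\ref{lem.emptydef}. Applying that lemma (with $\lambda := \mu$) to the family $(W_x)_{x \in M}$ yields an element $a \in K$ such that $W_\alpha = W_a$. Because $K$ is included in the language as constants, $W_a$ is $\emptyset$-definable, which finishes the proof.

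I do not expect any serious obstacle: once Claim~A is in hand, the argument is essentially a bookkeeping exercise that reshuffles parameters, and Lemma~\ref{lem.emptydef} was precisely designed to handle pcl-fibers of $\emptyset$-definable families of subsets of $\RV_{\lambda,M}^n$. The only point requiring a small care is verifying that the parameters $b_i$ coming from $L$ (which lies in the valued field sort by definition of $\acl_{\VF}$) are really captured by the valued-field variable $x$ in Lemma~\ref{lem.emptydef}; this is exactly what Claim~A provides.
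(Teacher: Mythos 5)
Your proposal is correct and follows essentially the same route as the paper: the paper likewise uses Claim~A to replace the $L$-parameters by $\alpha$ (writing the set as $\psi(\alpha,M)$ for an $\emptyset$-formula $\psi$, which is just a more compact form of your coding via definable functions $f_i$) and then applies Lemma~\ref{lem.emptydef} with $\lambda=\mu$ to find $a\in K$ with $W_\alpha=W_a$, hence $\emptyset$-definable thanks to the constants for $K$.
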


\begin{claimc*}
 $L \prec M$.
\end{claimc*}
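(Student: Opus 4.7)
The plan is to prove Claims A, B, and C in order, and to derive $\RV_L = \RV_K$ as an immediate consequence of Claim B. All three claims combine Lemmas~\ref{lem.finite} and \ref{lem.emptydef} with the defining properties of the relevant Hensel minimality (Lemmas~\ref{lem.hmin.equi} and \ref{lem.hmin.mix}), using that adding constants from $K$ has preserved those properties (so that $\dcl_{\VF}(\emptyset) = K$).

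For Claim A, fix $\beta \in \acl_{\VF}(\alpha)$ inside a $\emptyset$-definable family $F_x$ of subsets of $M$ of uniform finite size $N$, with $\beta \in F_\alpha$, and choose $\lambda$ as in Lemma~\ref{lem.finite}. Consider the $\emptyset$-definable family
\[
T_x := \{(\rv_\lambda(\gamma - \delta_1), \ldots, \rv_\lambda(\gamma - \delta_N)) \in \RV_\lambda^N : \gamma \in F_x,\ (\delta_1, \ldots, \delta_N) \text{ enumerates } F_x\}.
\]
Lemma~\ref{lem.emptydef} makes $T_\alpha$ $\emptyset$-definable; being finite and $K$-definable in $M \succ K$, elementarity forces $T_\alpha \subseteq \RV_{\lambda,K}^N$, so $\rv_\lambda(\gamma - \delta) \in \RV_{\lambda, K}$ for every $\gamma, \delta \in F_\alpha$. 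For any $\sigma \in \operatorname{Aut}(M/K\alpha)$, $\sigma$ fixes $\RV_{\lambda, K}$ pointwise and permutes $F_\alpha$, so it stabilises each $D_\gamma := \{\rv_\lambda(\gamma - \delta) : \delta \in F_\alpha\}$ while also carrying $D_\gamma$ to $D_{\sigma(\gamma)}$. The injectivity of $\gamma \mapsto D_\gamma$ from Lemma~\ref{lem.finite} then yields $\sigma(\gamma) = \gamma$, so $F_\alpha \subseteq \dcl_{\VF}(\alpha)$, proving Claim A.

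Claim B follows immediately: an $L$-definable subset of $\RV_\mu^n$ is $\alpha$-definable by Claim A, hence $\emptyset$-definable over $K$ by Lemma~\ref{lem.emptydef}. In particular, the singleton $\{\rv(\ell)\}$ for $\ell \in L$ is $\emptyset$-definable over $K$, so $\rv(\ell) \in \RV_K$; hence $\RV_L = \RV_K$.

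For Claim C, I apply the Tarski--Vaught test. Let $\emptyset \neq X \subseteq M$ be $L$-definable; by Claim A, $X$ is $\alpha$-definable, and Lemma~\ref{lem.hmin.equi} (or \ref{lem.hmin.mix}) produces a finite $C = \{c_1, \ldots, c_k\} \subseteq \acl_{\VF}(\alpha) = L$ together with an $\alpha$-definable $S \subseteq \RV_\mu^k$ such that $X = \Phi^{-1}(S)$ for $\Phi(x) := (\rv_\mu(x - c_i))_i$. By Claim B both $S$ and $\Phi(M)$ are $\emptyset$-definable over $K$; so $S \cap \Phi(M)$ is a non-empty $K$-definable subset of $\RV_\mu^k$ and contains, by $K \prec M$, a point $\bar r \in \RV_{\mu, K}^k$. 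Using $\RV_L = \RV_K$, lift each $r_i$ to $\tilde r_i \in L$ and let $j_0$ minimise $|\tilde r_i|$. A short ultrametric computation, using that $\Phi^{-1}(\bar r) \neq \emptyset$ in $M$ forces the intersection of open balls $\bigcap_i B_{<|\tilde r_i|\mu}(c_i + \tilde r_i)$ to coincide with the smallest one $B_{<|\tilde r_{j_0}|\mu}(c_{j_0} + \tilde r_{j_0})$, then puts its own $L$-rational centre $c_{j_0} + \tilde r_{j_0}$ into $X \cap L$. I expect this last ultrametric step to be the main point needing care: one must verify the equalities $\rv_\mu((c_{j_0} + \tilde r_{j_0}) - c_i) = r_i$ for $i \ne j_0$ by comparing absolute values of the chosen lifts.
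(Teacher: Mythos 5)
Your argument for Claim~C is correct and is essentially the paper's own proof: Tarski--Vaught, the Hensel-minimality lemma producing a finite $C\subset L$ so that the $L$-definable set is a union of fibres of $x\mapsto(\rv_\mu(x-c_i))_i$, Claim~B together with $K\prec M$ to find a point of the image in $(\RV_{\mu,K})^k$, and the ultrametric observation that the non-empty intersection of the corresponding balls equals the smallest one, whose centre $c_{j_0}+\tilde r_{j_0}$ lies in $L$ and in the chosen fibre. The only differences are cosmetic --- the paper lifts the relevant coordinate directly to $K$ (possible since it lies in $\RV_{\mu,K}$, so invoking $\RV_L=\RV_K$ is unnecessary) --- plus one degenerate case worth a line: if some $r_i=0$, the corresponding fibre is the singleton $\{c_i\}$ rather than an open ball of radius $\mu|\tilde r_i|$, but then $c_i\in X\cap L$ at once, so the conclusion is unaffected.
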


Then Claim B implies that $\RV_{L} = \RV_K$, since any element $\xi \in \RV_{L} \setminus \RV_K$ would provide a subset $\{\xi\}$ of  $\RV_{1,M}$ that is definable with parameters from $L$ but is not $\emptyset$-definable. Claim C implies that $K \prec L$. Thus upon proving the claims above,  we will be done.

\begin{proof}[Proof of Claim A]\renewcommand{\qedsymbol}{$\square$ (Claim A)}
Suppose that $\beta \in L$ and that the algebraicity of $\beta$ over $\{\alpha\}$ is witnessed by the formula $\phi(\alpha, y)$. Let $A$ be the finite set $\phi(\alpha,M)$, which is a subset of $L$ by the choice of $L$.
Depending on the residue characteristic of $K$, fix $\lambda$ as in the statement of Lemma \ref{lem.finite} (applied to this set $A$). Given any $b \in A$, we consider the (finite) set of ``$\RV_{\lambda}$ differences'' 
\[
D_b := \{\rv_\lambda(b - y) :  y \in \phi(\alpha, M)\}. 
\]
As $\phi(\alpha,M) \sseteq L$, each $D_b$ is a (finite) subset of $\RV_{\lambda,L}$.

By Lemma \ref{lem.finite}, different $b$ satisfying $\phi(\alpha, b)$ yield different finite sets $D_b \sseteq \RV_{\lambda,L}$. So $\beta$ can be defined by a formula (a priori over $L$) stating that
\[
\phi(\alpha, y) \,\,\,\wedge\,\,\, D_y = D_\beta,
\]
which uses the (finitely many) elements of $D_\beta \sseteq \RV_{\lambda,L}$ as additional parameters. To prove Claim A, it therefore suffices to check that $D_\beta \subseteq \RV_{\lambda,K}$ after all; from which it follows that $\beta$ can be defined over $K \cup \{\alpha\}$.

\begin{subclaim*}
 $D_\beta \subseteq \RV_{\lambda,K}$.
\end{subclaim*}

Now, with $x$ running over $M$, consider the $\emptyset$-definable family of sets 
\[
W_x := \{\rv_{\lambda}(y - y') : y \in \phi(x, M) \wedge y' \in \phi(x, M) \}. 
\]
Each such $W_x$ is a subset of $\RV_{\lambda,M}$. Note that $D_\beta \subseteq W_\alpha$, which is clear from their respective definitions. Now  we apply Lemma \ref{lem.emptydef} to the $\emptyset$-definable family $W_x$ to obtain that $W_\alpha$ is $\emptyset$-definable. As $\acl_{\VF}(\emptyset) = K$ (and $W_\alpha$ is finite), we therefore have that $W_\alpha \subseteq \RV_{\lambda,K}$. So in particular $D_\beta \subseteq \RV_{\lambda,K}$, which establishes the subclaim. 
\end{proof}

\begin{proof}[Proof of Claim B]\renewcommand{\qedsymbol}{$\square$ (Claim B)}
By Claim A, any set definable over $L$ is in fact $\{\alpha\}$-definable. So take any subset $W_\alpha$ of $(\RV_{\mu,M})^n$ that is definable over $L$ and let $\psi(\alpha,y)$ be a definition for it, where $\psi(x,y)$ is a formula over $\emptyset$.  Now consider the $\emptyset$-definable family of sets $W_x := \psi(x,M)$ in $M$. Applying Lemma~\ref{lem.emptydef} to this family yields that, for some $a \in K$, we have $W_\alpha=W_a$, and hence is $\emptyset$-definable, as required.
\end{proof}

%\medskip
\begin{proof}[Proof of Claim C]\renewcommand{\qedsymbol}{$\square$ (Claim C)}
We need to verify that every non-empty subset $Y \subseteq M$ that is definable with parameters from $L$ already contains a point of $L$.

From the $0$-h-minimality (or $0$-\hmix-minimality assumption), we obtain that there exists a 
finite $L$-definable set $C = \{c_1, \dots, c_r\} \subset M$ such that $Y$ is a union of fibers of the map 
\[
\begin{array}{rcl}
 \rho\colon  M &\to& (\RV_{\mu,M})^r,\\ 
   y &\mapsto& (\rv_{\mu}(y - c_i))_{i\leq r}
\end{array}
\]
for a suitable $\mu$. Indeed, if $K$ is of equi-characteristic 0
and $\Th(K)=\Th(M)$ is 0-h-minimal, then Lemma~\ref{lem.hmin.equi} (applied to $Y$ considered as a subset of $M \times \RV^0_M$) provides such a $C$ with $\mu = 1$. If $K$ is of mixed characteristic
and $\Th(K) = \Th(M)$ is $0$-\hmix-minimal, then we use Lemma~\ref{lem.hmin.mix} instead, which yields the analogous statement with $\mu = |p^\nu|$ for some integer $\nu \ge 1$.

Fix $C$ and $\mu$ for the rest of the proof; also fix an enumeration of $C$ and the corresponding map $\rho$ as above.
Since $L= \acl_{\VF}(L)$, we have $C \subseteq L$, so $\rho$ is $L$-definable, and so is the image $\rho(Y) \sseteq (\RV_{\mu,M})^r$ of $Y$. 
Hence by Claim B, that image is $\emptyset$-definable. By assumption $Y \ne \emptyset$, so we also have $\rho(Y) \ne \emptyset$; as $K$ is an elementary substructure of $M$, $\rho(Y) \cap (\RV_{\mu,K})^r$ is non-empty, too.

Choose any $\xi = (\xi_1, \dots, \xi_r) \in \rho(Y) \cap (\RV_{\mu,K})^r$ in this intersection. Since $Y$ is a union of fibers of $\rho$, to show that $L \cap Y$ is non-empty,
it suffices to prove that the preimage of $\xi$ in $M$,
\[
\rho^{-1}(\xi) = \bigcap_{i \le r} (c_i + \rv_{\mu}^{-1}(\xi_i)),
\]
has non-empty intersection with $L$.
Each of the finitely many sets $B_i := c_i + \rv_{\mu}^{-1}(\xi_i)$ is a ball and the intersection of all of them is
non-empty, since $\xi \in \rho(Y) \subseteq \rho(M)$. By the ultrametric inequality, this intersection is equal to one of those balls, say $B_j$. Since $\xi_j \in \RV_{\mu,K}$, there exists $a \in \rv_{\mu}^{-1}(\xi_i) \cap K$. Thus we obtain the desired element $c_j + a \in L \cap B_j = L \cap \rho^{-1}(\xi) \subseteq L \cap Y$.
\end{proof}

As explained after the statements of the claims, the proposition follows.
\end{proof}%of Lemma

We now conclude with the proof of Theorem~\ref{t.sphc.ex}.

\begin{proof}[Proof of Theorem~\ref{t.sphc.ex}]\renewcommand{\qedsymbol}{$\square$ (Theorem~\ref{t.sphc.ex})}
Fix some $(\#K)^+$-saturated elementary extension $M \succ K$. 
By Zorn's Lemma, there exist maximal elementary extensions $N \succ K$ satisfying $N \prec M$ and such that $\RV_{N} = \RV_K$. Fix $N$ to be one of them. The rest of the proof consists in showing that such an $N$ is spherically complete (using Proposition~\ref{p.sphc.ex}).

Suppose, towards a contradiction, that $(B_i)_{i \in I}$ is a nested family of closed balls in $N$ such that $\bigcap_{i \in I} B_i = \emptyset$. Being nested, this family of balls defines a partial type over $N$. Without loss, we assume that all the $B_i$ have different radii, so there are at most as many balls as the cardinality of the value group of $N$. As $N$ is an immediate extension of $K$, the value group of $N$ is the same as the value group of $K$. Therefore $M$ is sufficiently saturated to contain a realisation $\alpha \in M$ of this partial type. This $\alpha$ is a pcl over $N$, since for every $a \in N$ there exists an $i \in I$ with $a \notin B_i$; hence for any $a' \in B_i$, we have $|\alpha - a'| < |\alpha - a|$. Now Proposition~\ref{p.sphc.ex} (applied to $N \prec M$) implies that $L:=\acl_{\VF}(N,\alpha)$ is a proper elementary extension of $N$ in $M$ satisfying $\RV_{L} =\RV_{N} = \RV_K$, contradicting the maximality of $N$. We conclude instead that $L:=N$ is spherically complete.
\end{proof}

As a further model theoretic addendum, we add that the spherically complete elementary extension $L$ obtained above is also at least as saturated as $K$.

\begin{rmk}\label{r.sph.sat}
Suppose $K$ satisfies the assumptions of Theorem~\ref{t.sphc.ex} and is $\kappa$-saturated. % for some $\kappa$
Then any immediate spherically complete elementary extension $N \succ K$ is also $\kappa$-saturated.
\end{rmk}

\begin{proof}%[Proof of Remark~\ref{r.sph.sat}]
We write the proof in the mixed-characteristic case, which is easily simplified to the equi-characteristic 0 context. For ease of notation we write $\RV_\bullet := \bigcup_{n\in \NN} \RV_{|n|}$. Note that by our assumption of finite ramification, $\RV_K = \RV_N$ implies $\RV_{\bullet,K} = \RV_{\bullet,N}$.
Indeed, this follows from the existence of natural short exact sequences of multiplicative groups $k^\times \hookrightarrow (\RV_{\lambda',K} \setminus \{0\}) \twoheadrightarrow (\RV_{\lambda,K} \setminus \{0\})$ when $\lambda'$ is a predecessor of $\lambda$ in the value group, and where $k$ is the residue field of $K$.

Fix a $\kappa^+$-saturated and strongly $\kappa^+$-homogeneous $M \succ N$. Suppose we are given a type $p$ over a set $E \sseteq N$ of parameters of cardinality less than $\kappa$. Without loss, $E = \acl_{\VF}(E)$. Choose a realization $\varepsilon \in M$ of $p$.

Fix some enumeration of $E \times \NN$ and let $\xi := \left(\rv_{|n|}(\varepsilon - e)\right)_{e \in E, n\in \NN} \in \RV_{\bullet,M}^{\kappa'}$ (for some $\kappa' < \kappa$) be the sequence of leading term differences from $\varepsilon$ to $E$.
Let $q:=\tp(\xi/E)$ be its type. By stable embeddedness of $\RV_{\bullet}$, each formula $\psi \in q$ is equivalent to a formula $\psi'$ with parameters from $\RV_{\bullet,N} = \RV_{\bullet,K}$, so we find a set $E' \subset \RV_{\bullet,K}$ of cardinality $\le \# E < \kappa$ such that $q' := \tp(\xi/E')$ implies $\tp(\xi/E)$.
Since $K$ is $\kappa$-saturated, $q'$ is realized in $\RV_{\bullet,K}$. Pick such a realization $\xi'$, let $\sigma \in \operatorname{Aut}(M/E)$ be an automorphism sending $\xi$ to $\xi'$, and set $\varepsilon' := \sigma(\varepsilon)$.
Note that we have $\rv_{|n|}(\varepsilon' - e) \in \RV_{\bullet,K}$ for every $e \in E$ and every $n \in \NN$.

Now by $0$-\hmix-minimality, our type $p = \tp(\varepsilon'/E)$ is implied by $\{\rv_{|n|}(x - e) = \rv_{|n|}(\varepsilon' - e) : e \in E \wedge n \in \NN \}$.
Indeed, for each formula in $p$, the set $C$ provided by Lemma~\ref{lem.hmin.mix} lies in $E = \acl_{\VF}(E)$.
The formulas $\rv_{|n|}(x - e) = \rv_{|n|}(\varepsilon' - e)$ define a chain of nested balls, so they have non-empty intersection in $N$. Any point in this intersection is a realization of $p$.
\end{proof}

\section{Related questions}
\label{sec:qu}

Any power-bounded $T$-convex valued field $K$ in the sense of L. van den Dries and A. H. Lewenberg \cite{DL.Tcon1} is $1$-h-minimal by \cite[Theorem~6.3.4]{CHR-hmin} and hence also $0$-h-minimal, so Theorem~\ref{t.sphc.ex} applies.
However, this case is already given by E. Kaplan \cite[Corollary 1.12]{Kap2021}, and Kaplan's result establishes uniqueness (up to the relevant notion of isomorphism over $K$) of the spherically complete elementary extension $L$ in that case.
We can ask whether uniqueness holds more generally:

\begin{qu}
Assuming equi-characteristic $0$, or mixed characteristic and finitely ramified,
is the spherically complete immediate extension $L$ of $K$ constructed in Theorem~\ref{t.sphc.ex} unique as a structure in the expanded language (up to isomorphism over $K$)?
\end{qu}

It is then natural to consider what happens in the $T$-convex case when $T$ is not power-bounded. (Such $K$ are not $0$-h-minimal; see \cite[Remark~6.3.5]{CHR-hmin}.) By \cite[Theorem 2]{KKS1997}, any valued real closed field with an exponential does not have any spherically complete elementary extension (\cite[Remark 7]{KKS1997}). So, as explained in \cite[Remark 1.13]{Kap2021}, if $K$ is a $T$-convex valued field arising from a non power-bounded o-minimal $T$, then no spherically complete elementary extension (or even model) exists. One might suspect that such $K$ are not even definably spherically complete, but we do not know whether this is the case.

\medskip

Recall that in mixed characteristic, the assumption of finite ramification seemed to be natural since then, $0$-\hmix-minimality implies definable spherical completeness. However, one can still ask slightly more generally:

\begin{qu}
\label{qu.not.finram}
If $K$ is $0$-\hmix-minimal and definably spherically complete, does it have an (immediate) spherically complete elementary extension?
\end{qu}

Question~\ref{Q} also has a positive answer in some non-$0$-h-minimal cases. Concretely, in \cite[Theorem 6.3]{Kap2021}, conditions are given for certain $T$-convex valued fields expanded by certain derivations to have spherically complete immediate elementary extensions. Note that the field of constants of a non-trivial derivation on $K$ is a definable subset for which Lemma~\ref{lem.hmin.equi} does not hold, so these expansions are typically not $0$-h-minimal.

\bibliography{Canstratbib}
\bibliographystyle{alpha}
%\printbibliography

\end{document}